\def\eps{\varepsilon}
\def\be{\begin{equation}}
\def\ee{\end{equation}}
\def\dss{\displaystyle}
\newtheorem{lemma}{Lemma}
\newtheorem{thm}{Theorem}
\theoremstyle{definition}
\newtheorem{remark}{Remark}
\begin{document}

\title{Uniform convergence on a Bakhvalov-type mesh using the preconditioning approach:\\ Technical report}
\author{Th\'{a}i Anh Nhan$^{a,}$\thanks{This author's work is supported by the Irish Research
Council under Grant No. RS/2011/179.}~ and Relja Vulanovi\'c$^{b}$\\
$^{a}$\small{School of Mathematics, Statistics and Applied Mathematics,}\\
\small{National University of Ireland, Galway, Ireland}\\
$^{b}$\small{Department of Mathematical Sciences,
Kent State University at Stark,}\\
\small{6000 Frank Ave. NW, North Canton, OH 44720, USA}\\
\small{Email: a.nhan1@nuigalway.ie, 
rvulanov@kent.edu 
}}

\maketitle
\begin{abstract}
The linear singularly perturbed convection-diffusion problem in one
dimension is considered and its discretization on a Bakhvalov-type
mesh is analyzed. The preconditioning technique is used to obtain
the pointwise convergence uniform in the perturbation parameter.
\end{abstract}
\noindent \emph{Keywords:} singular perturbation,
convection-diffusion, boundary-value problem, Bakhvalov-type mesh,
finite differences, uniform convergence, preconditioning\\
\noindent \emph{2000 MSC:} 65L10,  65L12, 65L20, 65L70
\section{Introduction}\label{sec:intro}
The report is a supplement to~\cite{NV15}.
\section{The continuous problem}

We consider the problem
\begin{equation}\label{eq:1DCD}
\mathcal{L}u:= -\varepsilon u'' -b(x)u' + c(x)u=f(x),\ x\in(0,1),\
u(0)=u(1)=0,
\end{equation}
with a small positive perturbation parameter $\eps$ and
$C^1[0,1]$-functions $b$, $c$, and $f$, where $b$ and $c$ satisfy
\[
b(x)\ge \beta>0, \ \ c(x)\ge 0 \ \ \mbox{for $x\in I:=[0,1]$}.
\]
It is well known, see \cite{KT78, Lorenz} for instance, that
\eqref{eq:1DCD} has a unique solution $u$ in $C^3(I)$, which in
general has a boundary layer near $x=0$. Our goal is to find this
solution numerically.

The solution $u$ can be decomposed into the smooth and
boundary-layer parts. We present here Lin\ss's \cite[Theorem
3.48]{Linss10} version of such a decomposition:
\begin{equation}\label{eq:decomp}
 u(x) = s(x) + y(x),
\end{equation}
\begin{equation}\label{eq:decomp_est}
|s^{(k)}(x)|\le C\left(1+\eps^{2-k}\right), \quad |y^{(k)}(x)|\le
C\eps^{-k}e^{-\beta x/\eps},
\end{equation}
\[ x\in I, \quad k=0,1,2,3. \]
Above and throughout the report, $C$ denotes a generic positive
constant which is independent of $\eps$. For the construction of the
function $s$, see \cite{Linss10}, since the details are not of
interest here. As for $y$, it is important to note that it solves
the problem
\begin{equation*}\label{eq:homogeneous}
\mathcal{L}y(x)=0, \quad x\in (0,1),\quad y(0)=-s(0),\quad y(1)=0,
\end{equation*}
with a homogeneous differential equation. We shall use this fact
later on in the report.

\section{The discrete problem and condition number estimate}

We first define a finite-difference discretization of the problem
\eqref{eq:1DCD} on a general mesh $I^N$ with mesh points $x_i$,
$i=0,1,\ldots,N$, such that $0=x_0<x_1<\dots<x_N=1$. Throughout the
rest of the paper, the constants $C$ are also independent of $N$.

Let $h_i=x_i-x_{i-1}$, $i=1,2,\ldots, N$, and
$\hbar_i=(h_i+h_{i+1})/2$, $i=1,2,\ldots,N-1$. Mesh functions on
$I^N$ are denoted by $W^N$, $U^N$, etc. If $g$ is a function defined
on $I$, we write $g_i$ instead of $g(x_i)$ and $g^N$ for the
corresponding mesh function. Any mesh function $W^N$ is identified
with an $(N+1)$-dimensional column vector, $W^N=[W_0^N,
W_1^N,\ldots,W_N^N]^T$, and its maximum norm is given by
\[
\left\|W^N\right\| = \max_{0\le i\le N}|W^N_i| .
\]
For the matrix norm, which we also denote by $\|\cdot\|$, we take
the norm subordinate to the above maximum vector norm.

We discretize the problem \eqref{eq:1DCD} on $I^N$ using the upwind
finite-difference scheme:
\[ U^N_0  =  0, \]
\begin{equation}\label{eq:FD_scheme}
\mathcal{L}^NU_i^N:=  -\eps D'' U^N_i - b_iD'U^N_i + c_iU_i^N = f_i,
\quad i=1,2,\ldots, N-1,
\end{equation}
\[ U^N_N = 0, \]
where
\[
D''W^N_i=\frac{1}{\hbar_i}\left(\frac{W^N_{i+1}-W^N_i}{h_{i+1}}-
\frac{W^N_i-W^N_{i-1}}{h_i}\right)
\]
and
\[
D'W^N_i=\frac{W^N_{i+1}-W^N_i}{h_{i+1}}.
\]
The linear system \eqref{eq:FD_scheme} can be written down in matrix
form,
\begin{equation}\label{eq:matrix_form}
A_N U^N = \hat f^N,
\end{equation}
where $A_N=[a_{ij}]$ is a tridiagonal matrix with $a_{00}=1$ and
$a_{NN}=1$ being the only nonzero elements in the 0th and $N$th
rows, respectively, and where $\hat f^N = [0,f_1,f_2,\ldots,
f_{N-1}, 0]^T$.

It is easy to see that $A_N$ is an $L$-matrix, i.e., $a_{ii}>0$ and
$a_{ij}\le 0$ if $i\neq j$, for all $i,j =0,1,\ldots, N$. The matrix
$A_N$ is also inverse monotone, which means that it is non-singular
and that $A_N^{-1} \ge 0$ (inequalities involving matrices and
vectors should be understood component-wise), and therefore an
$M$-matrix (inverse monotone $L$-matrix). This can be proved using
the following $M$-criterion, see \cite{bo81} for instance.

\begin{thm}\label{M_crit}
Let $A$ be an $L$-matrix and let there exist a vector $w$ such that
$w>0$ and $Aw \ge \gamma$ for some positive constant $\gamma$. $A$
is then an $M$-matrix and it holds that $\| A^{-1}\| \le \gamma^{-1}
\| w \|$.
\end{thm}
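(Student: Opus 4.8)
The plan is to base everything on a single comparison (discrete maximum) principle: I claim that under the stated hypotheses, $Az \ge 0$ implies $z \ge 0$ for every vector $z$, with the given $w$ playing the role of a barrier. Granting this, the structural conclusions are immediate. Non-singularity: if $Az = 0$, then $Az \ge 0$ gives $z \ge 0$, and applying the implication to $-z$ (which satisfies $A(-z) = 0 \ge 0$) gives $-z \ge 0$, so $z = 0$. Inverse monotonicity: for each coordinate vector $e_j$, the vector $z = A^{-1}e_j$ satisfies $Az = e_j \ge 0$, hence $z \ge 0$; since this holds for every $j$, $A^{-1} \ge 0$. Together with the assumed $L$-matrix property, this is precisely the statement that $A$ is an $M$-matrix.

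First I would prove the comparison principle. Suppose, for contradiction, that some component of $z$ is negative, and put $t = \max_i(-z_i/w_i)$; since $w > 0$ we have $t > 0$, and $t$ is attained at some index $k$, so that $z_k = -t w_k$ while $z_i \ge -t w_i$ for all $i$. Writing out the $k$th equation of $Az$ and using that $a_{ki} \le 0$ for $i \neq k$ — so that multiplying $z_i \ge -t w_i$ by $a_{ki}$ reverses the inequality — together with $a_{kk} z_k = -t a_{kk} w_k$, one obtains
\[
(Az)_k \;\le\; -t\sum_i a_{ki} w_i \;=\; -t\,(Aw)_k \;\le\; -t\gamma \;<\; 0,
\]
which contradicts $(Az)_k \ge 0$. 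Hence $z \ge 0$. This is the core of the argument and the only place where the sign pattern of $A$ and the hypothesis $Aw \ge \gamma$ enter; the one thing to watch is the direction of the inequalities when multiplying through by the nonpositive off-diagonal entries.

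Finally, for the norm estimate, let $e = [1,1,\ldots,1]^T$, so that $Aw \ge \gamma$ reads $Aw \ge \gamma e$. Multiplying by $A^{-1} \ge 0$ preserves the inequality, giving $w \ge \gamma A^{-1}e$, i.e. $0 \le A^{-1}e \le \gamma^{-1}w$, and therefore $\|A^{-1}e\| \le \gamma^{-1}\|w\|$. To conclude, I would invoke the elementary fact that for a matrix $B \ge 0$ the subordinate maximum norm equals the largest row sum of $B$, which is exactly $\|Be\|$; applied to $B = A^{-1}$ this yields $\|A^{-1}\| = \|A^{-1}e\| \le \gamma^{-1}\|w\|$, as asserted. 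I expect no genuine obstacle in this last step beyond bookkeeping; the whole weight of the proof rests on the comparison principle.
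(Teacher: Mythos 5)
Your proof is correct. The paper does not prove Theorem~\ref{M_crit} at all --- it quotes it as a known result and refers to the literature (\cite{bo81}) --- so there is nothing to compare against; your argument is the standard one. The barrier/comparison step is sound: with $t=\max_i(-z_i/w_i)>0$ attained at $k$, the sign pattern $a_{ki}\le 0$ ($i\neq k$) turns $z_i\ge -tw_i$ into $a_{ki}z_i\le -ta_{ki}w_i$, giving $(Az)_k\le -t(Aw)_k\le -t\gamma<0$, a contradiction; non-singularity and $A^{-1}\ge 0$ follow as you say, and the norm bound via $w\ge\gamma A^{-1}e$ together with the row-sum characterization of the subordinate maximum norm is exactly right.
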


To see that $A_N$ is an $M$-matrix, just set $w_i = 2-x_i$,
$i=0,1,\ldots, N$ in Theorem \ref{M_crit} to get that $A_Nw \ge
\min\{1, \beta\}$. This also implies that the discrete problem
\eqref{eq:matrix_form} is stable uniformly in $\eps$,
\begin{equation}\label{eq:A_stable}
\| A_N^{-1}\| \le \frac{2}{\min\{1, \beta\}} \le C .
\end{equation}
Of course, the system \eqref{eq:matrix_form} has a unique solution
$U^N$.

\section{A Bakhvalov-type mesh}

A generalization of the Bakhvalov mesh~\cite{Bakh} to a class of
Bakhvalov-type meshes can be found in~\cite{Vul83}. Here we take one
of the Bakhvalov-type meshes from~\cite{Vul83} for the
discretization mesh $I^N$. We refer to this mesh as as
Vulanovi\'c-Bakhvalov mesh (VB-mesh). The points of the VB-mesh are
generated by the function $\lambda$ in the sense that
$x_i=\lambda(t_i)$, where $t_i=i/N$. The mesh-generating function
$\lambda$ is defined as follows:
\begin{equation}\label{eq:mesh_generating_lambda}
 \lambda(t)=
\begin{cases}
  \psi(t), & t\in[0,\alpha],\\
   \psi(\alpha)+\psi'(\alpha)(t-\alpha), & t\in [\alpha,1],
\end{cases}
\end{equation}
with $0<q<1$ and $\psi = a\eps \phi$, where
\begin{equation*}
\phi(t)=\frac{t}{q-t}=\frac{q}{q-t}-1, \ \ t\in [0,\alpha].
\end{equation*}
On the interval $[\alpha, 1]$, $\lambda$ is the tangent line from
the point $(1,1)$ to $\psi$, touching $\psi$ at $(\alpha,
\psi(\alpha))$. The point $\alpha$ can be determined from the
equation
\[
\psi(\alpha) + \psi'(\alpha)(1-\alpha) = 1 .
\]
Since $\phi'(t) = q/(q-t)^2$, the above equation reduces to a
quadratic one,
\[
a\eps\alpha(q-\alpha) +a\eps q(1-\alpha) = (q-\alpha)^2 ,
\]
which is easy to solve for $\alpha$:
\[
\alpha = \frac{q - \sqrt{a\eps q(1-q+a\eps)}}{1+a\eps} .
\]
We have to assume that $a\eps <q$ (which is equivalent to
$\psi'(0)<1$) and then $\alpha > 0$. Note also that $\alpha < q$ and
\begin{equation}
\label{eq:zeta} q-\alpha = \zeta \sqrt{\eps}, \ \  \zeta \le C, \ \
\frac{1}{\zeta} \le C .
\end{equation}
Let $J$ be the index such that $t_{J-1} < \alpha \le t_J$. Starting
from the mesh point $x_J$, the mesh is uniform, with step size $H$.
However, $x_J$ behaves differently from the transition point of the
Shishkin mesh because
\[
x_J \ge \psi(\alpha) = \frac{a\alpha}{\zeta}\sqrt{\eps} .
\]
We note that the transition point $\psi(\alpha)$ is different also
from the Bakhvalov-Shishkin of Vulanovi\'c-Shishkin meshes in the
sense of~\cite{RL99}.

We now give the estimate for the condition number of $A_N$ when the
discrete problem \eqref{eq:FD_scheme} is formed on the VB-mesh as
described above. The condition number is
\[
\kappa(A_N) := \| A_N^{-1}\| \| A_N\|.
\]
We estimate the upper bound for $\|A_N\|$ by examining the entries
of the matrix $A_N$ directly,
\begin{equation*}\label{eq:est_A_N}
\|A_N\|\le C\frac{N^2}{\eps}.
\end{equation*}
Combining this with \eqref{eq:A_stable}, we get the following
result.

\begin{thm}\label{thm:cond_A}
The condition number of $A_N$ on the VB-mesh satisfies the following
sharp bound:
\begin{equation*}
\kappa(A_N)\le C\frac{N^2}{\eps}.
\end{equation*}
\end{thm}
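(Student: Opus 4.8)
The plan is to combine the two ingredients already assembled in the excerpt: the uniform stability bound $\|A_N^{-1}\|\le C$ from \eqref{eq:A_stable}, and an upper bound of the form $\|A_N\|\le CN^2/\eps$ for the matrix norm. Since the condition number is defined as $\kappa(A_N)=\|A_N^{-1}\|\,\|A_N\|$, multiplying the two bounds immediately yields $\kappa(A_N)\le CN^2/\eps$. So the only real work is to establish $\|A_N\|\le CN^2/\eps$, and then (to justify the word "sharp") to exhibit a matching lower bound $\kappa(A_N)\ge cN^2/\eps$ for some positive constant $c$ independent of $\eps$ and $N$.

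For the upper bound on $\|A_N\|$, I would recall that the matrix norm subordinate to the maximum vector norm is the maximum absolute row sum, $\|A_N\|=\max_{0\le i\le N}\sum_{j}|a_{ij}|$. The rows $i=0$ and $i=N$ contribute only $1$. For an interior row $i$, the entries come from $-\eps D''-b_iD'+c_i$, so up to the benign terms $b_i/h_{i+1}$ and $c_i$ (both bounded by $C(1+1/h_{i+1})$), the dominant contribution is $\eps\big(\frac{1}{\hbar_i h_{i+1}}+\frac{1}{\hbar_i h_i}\big)$, i.e. of order $\eps/h_{\min}^2$ where $h_{\min}$ is the smallest step size. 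On the VB-mesh, the smallest step occurs in the layer region near $x=0$; using $x_i=\lambda(t_i)=a\eps\phi(t_i)$ there and $\phi'(t)=q/(q-t)^2\le q/(q-\alpha)^2$, together with \eqref{eq:zeta} which gives $q-\alpha=\zeta\sqrt\eps$, one finds $h_i=\lambda(t_i)-\lambda(t_{i-1})\le \frac{a\eps}{N}\max\phi'\le \frac{C\eps}{N}\cdot\frac{1}{\eps}=\frac{C}{N}$; a sharper bookkeeping near $i=1$ actually gives $h_1$ of order $\eps/N$ but $h_{\min}$ is of order... — the key point is that $h_i\ge c\eps/N$ throughout the layer (since $\phi'\ge \phi'(0)=1/q$ forces $h_i\ge \frac{a\eps}{qN}$), so $\eps/h_{\min}^2\le \eps\cdot\frac{C N^2}{\eps^2}=CN^2/\eps$. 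Collecting terms, every row sum is $O(N^2/\eps)$, which is the claimed bound.

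For sharpness, I would produce the matching lower bound by testing $A_N$ (or rather noting $\kappa(A_N)\ge \|A_N\|$ when $\|A_N\|\ge 1$, but more carefully using $\kappa(A_N)\ge \|A_N e_1\|/\|A_N^{-1}\|^{-1}$-type reasoning) on a coordinate vector. Concretely, $\|A_N\|\ge \sum_j|a_{1j}|\ge \eps/(\hbar_1 h_1)\ge c\eps/(C\eps/N)^2 = cN^2/\eps$, using that $h_1,\hbar_1$ are of order $\eps/N$ in the layer; combined with $\|A_N^{-1}\|\ge c$ (which follows because $\|A_N^{-1}\|\,\|A_N\|\ge\|A_N^{-1}A_N\|=\|I\|=1$ only gives $\ge 1$, so instead I would directly estimate one entry of $A_N^{-1}$ from below, or simply observe $\|A_N^{-1}\|\ge \|w\|/(\gamma^{-1}$-type bound is an upper bound, so for the lower bound use that the $(1,1)$ entry of $A_N^{-1}$ is positive and bounded below since $A_N$ is an $M$-matrix with bounded-below Green's function), one gets $\kappa(A_N)\ge cN^2/\eps$.

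The main obstacle I anticipate is the two-sided sharpness claim rather than the upper bound: the upper bound is a routine row-sum computation once the layer step size $h_i\sim\eps/N$ is pinned down from the mesh-generating function, but verifying that $\|A_N^{-1}\|$ is bounded \emph{below} by a positive constant (so that the large norm of $A_N$ actually survives into $\kappa$) requires a small argument — e.g. choosing a test vector $v$ with $\|v\|=1$ for which $\|A_Nv\|$ is small, or invoking a lower bound on the discrete Green's function. With the $M$-matrix structure already in hand this is short, but it is the one place where one must do something beyond multiplying two previously stated estimates.
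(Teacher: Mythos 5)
Your proposal follows the paper's argument exactly: the authors obtain $\|A_N\|\le CN^2/\eps$ simply ``by examining the entries of the matrix $A_N$ directly'' (i.e.\ the row-sum bound you compute, using that the layer steps satisfy $h_i\ge a\eps/(qN)$) and multiply this by the uniform stability bound \eqref{eq:A_stable}. The paper gives no proof of the word ``sharp'' in the statement, so your sketched matching lower bound (first-interior-row estimate for $\|A_N\|$ together with a positive lower bound on $\|A_N^{-1}\|$) is extra content beyond what the authors actually establish, and would need to be completed---e.g.\ via $\|A_N^{-1}\|\ge\|w\|/\|A_Nw\|$ with $w_i=2-x_i$---if one wanted to justify that claim.
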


\section{Conditioning}
Let $M=\mbox{diag }(m_0,m_1,\ldots, m_N)$ be a diagonal matrix with
the entries
\[
m_0 = 1, \ \ m_i = \frac{\hbar_{i}}{H}, \ i=1,2,\ldots, N-1, \ \
\mbox{and} \ \ m_N = 1.
\]
In other words,
\begin{equation}\label{eq:preconditioner}
m_0 = 1, \ \ m_i = \frac{\hbar_{i}}{H}, \ i=1,2,\ldots, J, \ \
\mbox{and} \ \ m_i = 1, i=J+1,\ldots, N.
\end{equation}
When the system \eqref{eq:matrix_form} is multiplied by $M$, this is
equivalent to multiplying the equations 1, 2, \ldots, $J$ of the
discrete problem \eqref{eq:FD_scheme}
 by $\hbar_{i}/H, i=1,2,\ldots, J$. The modified
system is
\begin{equation}\label{eq:mod_system}
\tilde A_N U^N = M\tilde f^N,
\end{equation}
where $\tilde A_N = MA_N$. Let the entries of $\tilde A_N$ be
denoted by $\tilde a_{ij}$, the nonzero ones being
\[
l_i := \tilde a_{i-1,i} = \left\{\begin{array}{ll}
\dss -\frac{\eps}{h_iH}, &  1\le i \le J-1, \\
 & \\
\dss -\frac{\eps}{h_JH}, & i=J, \\
 & \\
\dss - \frac{\eps}{H^2}, & J+1 \le i \le N-1,
\end{array}\right.
\]
\[
r_i := \tilde a_{i,i+1} = \left\{\begin{array}{ll}
\dss -\frac{\eps}{h_{i+1}H} - \frac{b_i\hbar_i}{h_{i+1}H}, &  1\le i \le J-1, \\
 & \\
\dss -\frac{\eps}{H^2} - \frac{b_i\hbar_i}{H^2}, & i=J, \\
 & \\
\dss - \frac{\eps}{H^2} - \frac{b_i}{H}, & J+1 \le i \le N-1,
\end{array}\right.
\]
and
\[
d_i := \tilde a_{ii} = \left\{\begin{array}{cl}
1, & i=0 \\
& \\
-l_i-r_i+ \dss \frac{\hbar_i}{H}c_i, &  1\le i \le J, \\
 & \\
-l_i-r_i+c_i, & J+1 \le i \le N-1, \\
 & \\
 1, & i=N.
\end{array}\right.
\]

Unlike the Shishkin mesh, which is piece-wise uniform, the VB-mesh
is graded in the fine part. Because of this, it is more difficulty
to prove the uniform stability of the modified scheme. This is done
in Lemma~\ref{lem:stability} below, but first we need some crucial
estimates for the graded mesh defined
by~\eqref{eq:mesh_generating_lambda}.

\begin{lemma}
For the mesh-generating function given
in~\eqref{eq:mesh_generating_lambda}, the following estimates hold
true:
\begin{equation}\label{eq:ratio_less_J-2}
\frac{\eps(h_{i+1}-h_i)}{h_ih_{i+1}}\le \frac{2}{a},\qquad i =
1,2,\ldots,J-2,
\end{equation}
and
\begin{equation}\label{eq:ratio_J}
\frac{\eps(H-h_J)}{h_JH}\le \frac{\zeta\sqrt{\eps}}{aq}.
\end{equation}
\end{lemma}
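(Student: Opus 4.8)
The plan is to first derive a closed form for the step sizes in the graded part of the mesh, after which both estimates become elementary algebra. For $1\le i\le J-1$ both $x_i=\psi(t_i)$ and $x_{i-1}=\psi(t_{i-1})$ lie on the curved piece, so using $\psi(t)=a\eps\left(\frac{q}{q-t}-1\right)$ and $t_i-t_{i-1}=1/N=:\tau$ one obtains
\[
h_i=\psi(t_i)-\psi(t_{i-1})=\frac{a\eps q\,\tau}{(q-t_i)(q-t_{i-1})},\qquad 1\le i\le J-1 .
\]
For \eqref{eq:ratio_less_J-2} I would substitute this formula, together with the analogous one for $h_{i+1}$ (valid because $i+1\le J-1$), into the left-hand side; cancelling the common factors and using $t_{i+1}-t_{i-1}=2\tau$ collapses everything to
\[
\frac{\eps(h_{i+1}-h_i)}{h_ih_{i+1}}=\frac{2(q-t_i)}{aq},
\]
and since $t_i>0$ gives $q-t_i<q$, the bound $2/a$ follows at once. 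This half is routine once the formula for $h_i$ is available.

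The estimate \eqref{eq:ratio_J} is the more delicate one, because $h_J$ is a ``hybrid'' step: its left endpoint $x_{J-1}=\psi(t_{J-1})$ sits on the curved piece, while its right endpoint $x_J=\psi(\alpha)+\psi'(\alpha)(t_J-\alpha)$ already lies on the tangent line. First note that for $i\ge J+1$ both endpoints are on the tangent line, so $H=\psi'(\alpha)\tau=a\eps q\tau/\delta^2$, where I write $\delta:=q-\alpha=\zeta\sqrt{\eps}$ using \eqref{eq:zeta}; and set $v:=\alpha-t_{J-1}$, so that $0<v\le\tau$ by the definition of $J$, and $\tau-(t_J-\alpha)=v$. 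Writing $h_J=\psi(\alpha)-\psi(t_{J-1})+\psi'(\alpha)(t_J-\alpha)$ and subtracting it from $H=\psi'(\alpha)\tau$ yields
\[
H-h_J=\psi'(\alpha)v-\bigl(\psi(\alpha)-\psi(t_{J-1})\bigr).
\]
Evaluating $\psi(\alpha)-\psi(t_{J-1})=a\eps q\left(\tfrac1\delta-\tfrac1{\delta+v}\right)=\dfrac{a\eps q\,v}{\delta(\delta+v)}$ and combining, the right-hand side telescopes to the clean expression $H-h_J=\dfrac{a\eps q\,v^2}{\delta^2(\delta+v)}$.

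Finally, to bound the quotient I would use the trivial lower bound $h_J\ge\psi(\alpha)-\psi(t_{J-1})=\dfrac{a\eps q\,v}{\delta(\delta+v)}$, obtained by dropping the nonnegative term $\psi'(\alpha)(t_J-\alpha)$. Substituting this, the closed form for $H-h_J$, and $H=a\eps q\tau/\delta^2$ into $\dfrac{\eps(H-h_J)}{h_J H}$, all the powers of $a$, $\eps$, $q$ and the factors $\delta+v$ cancel, leaving exactly $\dfrac{v\,\delta}{aq\,\tau}$; since $v\le\tau$ this is at most $\delta/(aq)=\zeta\sqrt{\eps}/(aq)$, which is \eqref{eq:ratio_J}. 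The only point requiring care is the bookkeeping for the hybrid step $h_J$ and the identity $\tau-(t_J-\alpha)=\alpha-t_{J-1}$; once those are settled, no monotonicity or convexity arguments are needed, because the mesh-generating function is explicit enough that every step is an exact computation.
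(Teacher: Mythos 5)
Your proof is correct. For \eqref{eq:ratio_less_J-2} you follow exactly the paper's computation: the closed form $h_i=a\eps q/(N(q-t_{i-1})(q-t_i))$ for steps entirely inside the curved piece leads to the identity $\eps(h_{i+1}-h_i)/(h_ih_{i+1})=2(q-t_i)/(aq)\le 2/a$. For \eqref{eq:ratio_J} your strategy is the same as the paper's up to the identity $H-h_J=\dfrac{a\eps q}{(q-\alpha)^2}\cdot\dfrac{(\alpha-t_{J-1})^2}{q-t_{J-1}}$ (your $\delta$, $v$ notation reproduces this exactly), but from there you diverge in a useful way: instead of carrying the full hybrid step $h_J=\gamma_1+\gamma_2$ into the quotient, you discard the nonnegative tangent-line contribution $\gamma_2=\psi'(\alpha)(t_J-\alpha)$ and use only $h_J\ge\gamma_1=a\eps q\,v/(\delta(\delta+v))$. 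This makes every factor cancel by hand and yields $\eps(H-h_J)/(h_JH)\le v\delta/(aq\tau)\le\delta/(aq)=\zeta\sqrt{\eps}/(aq)$ directly from $v\le\tau$. The paper instead keeps the full $h_J$, which forces it to introduce and bound from below the quantity $\omega=q/N-\alpha^2+2\alpha t_{J-1}-t_{J-1}t_J$; that is the most delicate step of its proof and your lower bound on $h_J$ sidesteps it entirely while producing the identical final constant. The one point to make explicit is that $\gamma_1>0$ (i.e.\ $t_{J-1}<\alpha$ strictly, which holds by the definition of $J$), so the division by the lower bound is legitimate; with that noted, your argument is complete and somewhat cleaner than the original.
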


\begin{proof}
For $i\le J-2$, we have
\begin{equation*}\label{eq:hi}
\begin{split}
h_i=x_i-x_{i-1}&=a\eps\left( \frac{q}{q-t_i} - \frac{q}{q-t_{i-1}}\right)=\frac{a\eps q}{N(q-t_{i-1})(q-t_i)},\\
h_{i+1}&=\frac{a\eps q}{N(q-t_i)(q-t_{i+1})},
\end{split}
\end{equation*}
and
\[
h_{i+1}-h_i=\frac{2a\eps q}{N^2(q-t_{i-1})(q-t_i)(q-t_{i+1})}.
\]
Then \eqref{eq:ratio_less_J-2} follows because
\[
\frac{\eps(h_{i+1}-h_{i})}{h_ih_{i+1}}=\frac{2(q-t_i)}{aq}=\frac{2}{a}\left(1-\frac{t_i}{q}\right)\le
\frac{2}{a}.
\]

The proof of \eqref{eq:ratio_J} is more complicated due to the
presence of $h_J$. First, $h_J=\gamma_1 +\gamma_2$, where
$\gamma_1=x_\alpha-x_{J-1}$, $\gamma_2=x_J-x_\alpha$, and
$x_\alpha=\psi(\alpha)$. Since
\[
\begin{split}
\gamma_2&=\psi'(\alpha)(t_J-\alpha)\\
&=\frac{a\eps q}{q-\alpha}\left(\frac{t_J-\alpha}{q-\alpha}\right)\\
\end{split}
\]
and
\[
\begin{split}
\gamma_1&=a\eps\left(\phi(\alpha)-\phi\left(t_{J-1}\right)\right)\\
&=a\eps\left(\frac{\alpha}{q-\alpha}-\frac{t_{J-1}}{q-t_{J-1}}\right)\\
&=\frac{a\eps q}{q-\alpha}\cdot\frac{\alpha-t_{J-1}}{q-t_{J-1}},
\end{split}
\]
we have
\begin{equation*}\label{eq:h_J}
\begin{split}
h_J&=\frac{a\eps
q}{q-\alpha}\left[\frac{t_J - \alpha}{q-\alpha}+\frac{\alpha-t_{J-1}}{q-t_{J-1}}\right]\\
&=\frac{a \eps q}{(q-\alpha)^2}\left[t_J-\alpha+\frac{(q-\alpha)(\alpha-t_{J-1})}{q-t_{J-1}}\right]\\
&=\frac{a \eps
q}{\zeta^2}\left[t_J-\alpha+\frac{\zeta\sqrt{\eps}(\alpha-t_{J-1})}{q-t_{J-1}}\right].
\end{split}
\end{equation*}
Moreover,
\[
\psi'(\alpha)=\frac{a\eps q}{(q-\alpha)^2} \quad \mbox{and }
H=x_{J+1}-x_J=\frac{\psi'(\alpha)}{N},
\]
implying that
\[
H=\frac{a \eps q}{N(q-\alpha)^2}.
\]
Therefore,
\[
\begin{split}
H-h_J&=\frac{a\eps q}{q-\alpha}\left[\frac{1}{N(q-\alpha)}-
\frac{t_J-\alpha}{q-\alpha}-\frac{\alpha-t_{J-1}}{q-t_{J-1}}\right]\\
&=\frac{a\eps q}{q-\alpha}\left[
\frac{\alpha-t_{J-1}}{q-\alpha}-\frac{\alpha-t_{J-1}}{q-t_{J-1}}\right]\\
&=\frac{a\eps q}{q-\alpha}\left(\alpha-t_{J-1}\right)\left[
\frac{1}{q-\alpha}-\frac{1}{q-t_{J-1}}\right]\\
&=\frac{a\eps q}{q-\alpha}\left(\alpha-t_{J-1}\right)\frac{\alpha-t_{J-1}}{(q-\alpha)(q-t_{J-1})}\\
&=\frac{a\eps q}{(q-\alpha)^2}\cdot\frac{(\alpha-t_{J-1})^2}{q-t_{J-1}}.\\
\end{split}
\]
We now have
\[
\begin{split}
\eps\frac{H-h_J}{h_JH} &=\frac{a\eps^2
q}{(q-\alpha)^2}\cdot\frac{(\alpha-t_{J-1})^2}{q-t_{J-1}}
\cdot\frac{q-\alpha}{a\eps q}\cdot\frac{1}{\frac{t_J-\alpha}{q-\alpha}+\frac{\alpha-t_{J-1}}{q-t_{J-1}}} \cdot\frac{(q-\alpha)^2N}{a\eps q}\\
&=\frac{(q-\alpha)N}{aq}\cdot\frac{(\alpha-t_{J-1})^2}{q-t_{J-1}}
\cdot\frac{(q-\alpha)(q-t_{J-1})}{\frac{q}{N}-\alpha^2+2\alpha t_{J-1}-t_{J-1}t_J}\\
&=\frac{(q-\alpha)^2N}{aq}\cdot\frac{(\alpha-t_{J-1})^2}{\omega}\le
\frac{\zeta^2\eps}{aqN}\cdot\frac{1}{\omega},
\end{split}
\]
where
\[
\omega:= \frac{q}{N}-\alpha^2+2\alpha t_{J-1} - t_{J-1}t_J
\]
and where in the last step we used~\eqref{eq:zeta} and the fact that
$0\le \alpha-t_{J-1} \le 1/N$. The denominator $\omega$ can be
estimated as follows:
\[
\begin{split}
\omega
&=\frac{q}{N}-\left(\alpha-t_{J-1}\right)^2 - \frac{t_{J-1}}{N}\\
&=\frac{\zeta\sqrt{\eps}+\alpha}{N}-\left(\alpha-t_{J-1}\right)^2 - \frac{t_{J-1}}{N}\\
&=\frac{\zeta\sqrt{\eps}}{N}+\frac{1}{N}\left(\alpha-t_{J-1} \right)-\left(\alpha-t_{J-1}\right)^2\\
&=\frac{\zeta\sqrt{\eps}}{N}+\left(\alpha-t_{J-1} \right)\left(t_J-\alpha\right)\\
&\ge\frac{\zeta\sqrt{\eps}}{N},
\ \ \mbox{since } \left(\alpha-t_{J-1} \right)\left(t_J-\alpha\right)\ge0.\\
\end{split}
\]
Therefore,
\[
\eps\frac{H-h_J}{h_JH} \le
\frac{\zeta^2\eps}{aqN}\cdot\frac{N}{\zeta\sqrt{\eps}}
=\frac{\zeta\sqrt{\eps}}{aq}.
\]
This completes the proof of \eqref{eq:ratio_J}.
\end{proof}

It is easy to see that $\tilde A_N$ is an $L$-matrix. The next lemma
shows that $\tilde A_N$ is an $M$-matrix and that the modified
discretization \eqref{eq:mod_system} is stable uniformly in $\eps$.

\begin{lemma}\label{lem:stability}
Let $\eps$ be sufficiently small, independently of $N$, and let
$a>4/\beta$. Then the matrix $\tilde A_N$ of the system
\eqref{eq:mod_system} satisfies
\[
\left\|\tilde A_N^{-1}\right\|\le C.
\]
\end{lemma}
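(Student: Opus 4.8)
The plan is to apply the $M$-criterion, Theorem~\ref{M_crit}, to $\tilde A_N$. Since $\tilde A_N$ is already known to be an $L$-matrix, it suffices to exhibit a mesh function $w$ with $w>0$, $\|w\|\le C$, and $\tilde A_Nw\ge\gamma$ componentwise for some constant $\gamma>0$ independent of $\eps$ and $N$; Theorem~\ref{M_crit} then gives $\|\tilde A_N^{-1}\|\le\gamma^{-1}\|w\|\le C$. The barrier I would take is linear in the mesh index, $w_i=2-t_i=2-i/N$, so that $1\le w_i\le2$ and $\|w\|\le2$. This choice is natural because then $D'w_i=-1/(Nh_{i+1})$, so after the row scaling by $m_i=\hbar_i/H$ the convection contribution $-m_ib_iD'w_i=b_i\hbar_i/(NHh_{i+1})$ is of order $1$ (using $\hbar_i\ge h_{i+1}/2$, $b_i\ge\beta$, and $NH=aq/\zeta^2$, which is bounded above and below by positive constants by~\eqref{eq:zeta}); this is exactly the term that must dominate everything else in the fine part.

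Verifying $\tilde A_Nw\ge\gamma$ then splits into the expected regions. At the boundary, $(\tilde A_Nw)_0=w_0=2$ and $(\tilde A_Nw)_N=w_N=1$. In the coarse part $J+1\le i\le N-1$ the mesh is uniform and $w$ is linear in $i$, hence $D''w_i=0$ and $m_i=1$, so $(\tilde A_Nw)_i=-b_iD'w_i+c_iw_i\ge b_i/(NH)\ge\beta\zeta^2/(aq)$, bounded below. In the fine part $1\le i\le J-2$ a short computation gives
\[
(\tilde A_Nw)_i=\frac1{NH}\left[-\frac{\eps(h_{i+1}-h_i)}{h_ih_{i+1}}+\frac{b_i\hbar_i}{h_{i+1}}\right]+\frac{\hbar_i}{H}c_iw_i\ge\frac1{NH}\left(\frac\beta2-\frac2a\right),
\]
where~\eqref{eq:ratio_less_J-2} bounds the first bracketed term by $2/a$, while $\hbar_i/h_{i+1}\ge1/2$ and $b_i\ge\beta$ handle the second, and the reaction term is nonnegative. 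This is precisely where the hypothesis $a>4/\beta$ enters: it is exactly the condition making $\beta/2-2/a>0$, and since $NH$ is bounded above this yields a uniform positive lower bound on the fine part.

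The rows $i=J-1$ and $i=J$, where the mesh passes from graded to uniform and the simple closed form for $h_i$ is no longer available, are the crux. For $i=J$ the same kind of computation produces the term $-\eps(H-h_J)/(Nh_JH^2)$; by~\eqref{eq:ratio_J} this is of order $\sqrt\eps$ (the factor $N$ cancels against $1/H$), whereas the convection contribution $b_J\hbar_J/(NH^2)\ge\beta/(2NH)$ stays bounded below, so for $\eps$ sufficiently small, independently of $N$, the convection term wins and $(\tilde A_Nw)_J\ge\gamma>0$. For $i=J-1$ one needs the index-$(J-1)$ analogue of~\eqref{eq:ratio_less_J-2}, i.e.\ a bound $\eps(h_J-h_{J-1})/(h_{J-1}h_J)\le C$ with a sufficiently small constant; this can be read off from the explicit expressions for $h_{J-1}$ and $h_J$ derived in the preceding lemma's proof, together with the convexity of $\psi$ and $0\le\alpha-t_{J-1},\,t_J-\alpha\le1/N$, after which the estimate for this row proceeds exactly as in the fine part. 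Taking $\gamma$ to be the minimum of the finitely many positive lower bounds obtained finishes the proof. The main obstacle is exactly this transition analysis: unlike the piecewise-uniform Shishkin mesh there is no clean jump at $x_J$, and the delicate point is to control $\eps(H-h_J)/(h_JH)$ and $\eps(h_J-h_{J-1})/(h_{J-1}h_J)$ sharply enough that, after the row scaling, the convection term still dominates.
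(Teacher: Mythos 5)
Your strategy is sound and reaches the conclusion by a genuinely different route. The paper also applies the $M$-criterion, but with a more elaborate barrier: $v_i=\alpha-Hi+\lambda$ for $i\le J-1$ and $v_i=\alpha-Hi+\lambda(1+\rho)^{J-i}/(1+\rho_J)$ for $i\ge J$, with $\rho=\beta H/(2\eps)$ and $\rho_J=\beta h_J/(2\eps)$. The purpose of the exponential component is precisely the row $i=J-1$ that you identify as the crux: there the paper bounds $\eps/h_{J-1}$ crudely by $N/(aq)$ and absorbs it with the $O(N)$ positive term $\lambda\beta/(4H)$ created by the jump in the barrier's structure, choosing $\lambda\ge 8/(aq\beta)$. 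Your linear barrier $w_i=2-t_i$ buys you a much lighter verification at row $J$ (no need to check $l_J(1+\rho_J)(1+\rho)+d_J(1+\rho)+r_J\ge 0$) and in the coarse region, but in exchange you must prove that the cancellation $\eps/h_{J-1}-\eps/h_J=\eps(h_J-h_{J-1})/(h_{J-1}h_J)$ is small --- an estimate the paper never needs and which its Lemma~1 does not supply.

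That estimate does hold, so your sketch can be completed, but you should carry it out rather than assert it can be read off, because it hides the one place where your barrier pays a price. Since $h_J\le H$ by convexity of $\lambda$, one has $\eps/h_{J-1}-\eps/h_J\le\eps/h_{J-1}-\eps/H=\frac{N}{aq}\bigl[(q-t_{J-2})(q-t_{J-1})-(q-\alpha)^2\bigr]\le\frac{1}{aq}\left(3\zeta\sqrt{\eps}+2/N\right)$, using $q-t_{J-1}\le q-\alpha+1/N$ and $q-t_{J-2}\le q-\alpha+2/N$. The term $3\zeta\sqrt{\eps}/(aq)$ is harmless for $\eps$ small, but the residual $2/(aqN)$ does not vanish with $\eps$: to conclude $\beta/2-\eps(h_J-h_{J-1})/(h_{J-1}h_J)\ge\gamma>0$ under the stated hypothesis $a>4/\beta$ you need, in addition, $N\ge N_0$ with $N_0$ depending only on $q$ (or, alternatively, the marginally stronger assumption $a>4/(q\beta)$). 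This is a cosmetic restriction that the paper's barrier avoids, but it should be stated. Everything else in your argument --- the boundary rows, the coarse region where $D''w_i=0$, the fine region via \eqref{eq:ratio_less_J-2} together with $a>4/\beta$, row $J$ via \eqref{eq:ratio_J} for $\eps$ small, and the repeated use of the fact that $NH=aq/\zeta^2$ is bounded above and below by \eqref{eq:zeta} --- checks out.
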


\begin{proof}
We want to construct a vector $v=[v_0,v_1,\ldots, v_N]^T$ such that
\begin{itemize}
\item[(a)] $v_i\ge \delta$, $i=0,1,\ldots, N$, where $\delta$ is a positive constant independent of both $\eps$ and $N$,
\item[(b)] $v_i\le C$, $i=0,1,\ldots, N$,
\item[(c)] $\sigma_i:=l_i v_{i-1} + d_i v_i + r_i v_{i+1} \ge \delta$, $i=1,2,\ldots, N-1$.
\end{itemize}
Then, according to the $M$-criterion,
\[
\|\tilde A_N^{-1}\| \le \delta^{-1}\|v\| \le C.
\]

The following choice of the vector $v$ is motivated
by~\cite{Roos96,VN14,NV15}:
\[
v_i=
\begin{cases}
\alpha-Hi +\lambda, & i\le J-1,\\
\alpha-Hi+\dfrac{\lambda}{1+\rho_J}(1+\rho)^{J-i}, &i\ge J,\\
\end{cases}
\]
where $\rho_J=\beta h_J/(2\eps)$, $\rho=\beta H/(2\eps)$, and
$\alpha$ and $\lambda$ are fixed positive constants. Since $HN\le
C$, there exists a constant $\alpha$ such that $v_i\ge \alpha-Hi\ge
\delta>0$, so the condition (a) is satisfied. Then, because of
$v_i\le \alpha+\lambda$, the condition (b) holds true if we show
that $\lambda\le C$. We do this next as we verify the condition (c).

When $1\le i\le J-2$, we use \eqref{eq:ratio_less_J-2} to get
\[
\begin{split}
\sigma_i 
&=(l_i+d_i+r_i)v_i+l_iH-r_iH\\
&=\frac{\hbar_i}{H}c_iv_i-\frac{\eps}{h_i}+\frac{\eps}{h_{i+1}}+\frac{b_i\hbar_i}{h_{i+1}}\\
&\ge-\left(\frac{\eps}{h_i}-\frac{\eps}{h_{i+1}}\right)+\frac{b_i}{2}+\frac{b_ih_i}{2h_{i+1}}\\
&=-\frac{\eps(h_{i+1}-h_i)}{h_ih_{i+1}}+\frac{b_i}{2}+\frac{b_ih_i}{2h_{i+1}}\\
&\ge -\frac{2}{a} +\frac{b_i}{2} \ge
\frac{\beta}{2}-\frac{2}{a}=:\delta>0.
\end{split}
\]
The constant $\delta$ exists because of the assumption $a>4/\beta$.

For $i=J-1$, we have
\[
\begin{split}
\sigma_{J-1} 
&=\frac{\hbar_{J-1}}{H}c_{J-1}v_{J-1}+l_{J-1}H-r_{J-1}H\\
&\quad +\lambda l_{J-1}+\lambda d_{J-1}+r_{J-1}\frac{\lambda}{1+\rho_J}\\
&\ge-\frac{\eps}{h_{J-1}}+\frac{\eps}{h_J}+\frac{b_{J-1}\hbar_{J-1}}{h_J}-r_{J-1}\frac{\lambda\rho_J}{1+\rho_J}\\
&\ge -\frac{\eps}{h_{J-1}}+\frac{b_{J-1}}{2}-r_{J-1}\frac{\lambda\rho_J}{1+\rho_J}\\
&\ge -\frac{\eps}{h_{J-1}}+\frac{\beta}{2} + \left(\frac{\eps}{h_JH}+\frac{b_{J-1}\hbar_{J-1}}{h_JH}\right)\frac{\lambda \beta h_J}{2\eps+\beta h_J}\\
&= -\frac{\eps}{h_{J-1}}+\frac{\beta}{2} + \left(\frac{2\eps + b_{J-1}(h_{J-1}+h_J)}{2h_JH}\right)\frac{\lambda \beta h_J}{2\eps+\beta h_J}\\
&\ge \frac{\beta}{2} -\frac{\eps}{h_{J-1}} + \frac{\lambda\beta}{4H}
\ge \frac{\beta}{2} >\delta
\end{split}
\]
with a suitable positive constant $\lambda$. We can choose such
$\lambda$ because the estimates $H\le 2N^{-1}$ and $q-t_{J-1}\le
q-t_{J-2}\le 1$ imply
\[
\frac{\lambda\beta}{4H}-\frac{\eps}{h_{J-1}}=\frac{\lambda\beta}{4H}-\frac{N}{aq}
\left(q-t_{J-1}\right)\left(q-t_{J-2}\right) \ge
N\left(\frac{\lambda\beta}{8}-\frac{1}{aq}\right)\ge 0.
\]

For $i=J$, we get
\begin{small}
\[
\begin{split}
\sigma_J 
&=\frac{\hbar_{J}}{H}c_{J}v_{J}+l_JH-r_JH+\lambda\left[l_J+ \frac{d_J}{1+\rho_J} + \frac{r_J}{(1+\rho_J)(1+\rho)}\right] \\
&\ge-\frac{\eps}{h_J}+\frac{\eps}{H}+\frac{b_J\hbar_J}{H}\\
&\quad +\frac{\lambda}{(1+\rho_J)(1+\rho)}\left[l_J(1+\rho_J)(1+\rho)+ d_J(1+\rho) + r_J\right] \\
&\ge\frac{\eps}{H}-\frac{\eps}{h_J}+\frac{b_J}{2}\\
&\quad +\frac{\lambda}{(1+\rho_J)(1+\rho)}\left[l_J(1+\rho_J)(1+\rho)+ d_J(1+\rho) + r_J\right]\\
&\ge \frac{\beta}{2} -\frac{\eps(H-h_J)}{h_JH}\ge\delta>0.
\end{split}
\]
\end{small}
The above estimate holds true because \eqref{eq:ratio_J} implies
that
\[
\frac{\eps(H-h_J)}{h_JH}\le \frac{\zeta\sqrt{\eps}}{aq}\le
\frac{2}{a},
\]
when $\eps$ is sufficiently small, and because we can show that
\[
\left[l_J(1+\rho_J)(1+\rho)+ d_J(1+\rho) + r_J\right]\ge 0.
\]
Indeed,
\[
\begin{split}
l_J(1+\rho_J)(1+\rho)+ d_J(1+\rho) +
r_J&=l_J\rho_J+l_J\rho_J\rho-r_J\rho\\
&= -\frac{\eps}{h_JH}\frac{\beta h_J}{2\eps}-\frac{\eps}{h_J
H}\frac{\beta h_J}{2\eps}\frac{\beta H}{2\eps}\\
&\quad +\left[\frac{\eps}{H^2}+ \frac{b_J\hbar_J}{H^2}\right]\frac{\beta H}{2\eps}\\
&= -\frac{\beta^2}{4\eps}+\frac{\beta b_J\hbar_J}{2H\eps}\\
&=-\frac{\beta^2}{4\eps}+\frac{\beta b_J(h_J+H)}{4H\eps}\\
&\ge-\frac{\beta^2}{4\eps}+\frac{\beta b_J}{4\eps}\ge 0.
\end{split}
\]

Finally, when $J+1\le i\le N-1$, we have
\[
\begin{split}
\sigma_i 
&=c_iv_i+ l_iH  - r_iH + \frac{l_i}{1+\rho_J} \left[\frac{\lambda}{(1+\rho)^{i-1-J}} - \frac{\lambda}{(1+\rho)^{i-J}}\right] \\
&\quad + \frac{r_i}{1+\rho_J} \left[\frac{\lambda}{(1+\rho)^{i+1-J}} - \frac{\lambda}{(1+\rho)^{i-J}}\right] \\
&\ge b_i + \frac{\rho(1+\rho)l_i-\rho r_i}{(1+\rho_J)(1+\rho)^{i+1-J}}\lambda \\
&\ge \frac{\beta}{2} + \frac{(l_i-r_i + l_i\rho)\rho}{(1+\rho_J)(1+\rho)^{i+1-J}}\lambda \\
&= \frac{\beta}{2} + \left(\frac{b_i}{H}-\frac{\beta}{2H}\right)\frac{\lambda\rho(1+\rho)^{J-i-1}}{1+\rho_J}\\
&\ge \frac{\beta}{2} > \delta.
\end{split}
\]
\end{proof}

By examining the elements of the matrix $\tilde A_N$, we see that
\[
\| \tilde A_N \| \le C N^2.
\]
When we combined this with Lemma \ref{lem:stability}, we get the
following result.

\begin{thm}\label{conditioning}
The matrix $\tilde A_N$ of the system \eqref{eq:mod_system}
satisfies
\[
\kappa(\tilde A_N) \le C N^2.
\]
\end{thm}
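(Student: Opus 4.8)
The plan is to assemble the bound from the two pieces already in hand: by definition $\kappa(\tilde A_N)=\|\tilde A_N\|\,\|\tilde A_N^{-1}\|$, and Lemma~\ref{lem:stability} already gives $\|\tilde A_N^{-1}\|\le C$, so the only thing left to do is to show $\|\tilde A_N\|\le CN^2$.

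Since $\|\cdot\|$ is the norm subordinate to the maximum vector norm, $\|\tilde A_N\|$ is the largest absolute row sum of $\tilde A_N$. Rows $0$ and $N$ contribute $1$. For $1\le i\le N-1$, since $\tilde A_N$ is an $L$-matrix we have $l_i\le 0$, $r_i\le 0$, $d_i>0$, and $d_i=-l_i-r_i+e_i$ with $e_i=\hbar_ic_i/H\ge 0$ for $i\le J$ and $e_i=c_i\ge 0$ for $i>J$; hence the $i$th absolute row sum equals $|l_i|+d_i+|r_i|=-2(l_i+r_i)+e_i$. So it is enough to bound each of $|l_i|$, $|r_i|$, $e_i$ by $CN^2$ uniformly in $i$.

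To this end I would record three elementary facts about the VB-mesh. First, $H=\psi'(\alpha)/N$ with $\psi'(\alpha)=a\eps q/(q-\alpha)^2=aq/\zeta^2$, which together with \eqref{eq:zeta} gives $H\asymp N^{-1}$. Second, $\lambda$ is convex on $[0,1]$ (convex on $[0,\alpha]$, linear on $[\alpha,1]$, and $C^1$ at $\alpha$), so the spacings $h_i=\lambda(t_i)-\lambda(t_{i-1})$ are nondecreasing in $i$; thus $c\eps N^{-1}\asymp h_1\le h_2\le\dots\le h_J\le h_{J+1}=H$, where $h_1=a\eps(1/N)/(q-1/N)$ is the smallest spacing. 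Third, consequently $\hbar_i\le H$ and $h_i/h_{i+1}\le 1$ throughout the fine part, and $\hbar_J=(h_J+H)/2\le H$ as well. Using these, in every one of the three regimes $|l_i|\le\eps/(h_1H)\le CN^2$; the convection part of $|r_i|$ is at most $b_i\hbar_i/(h_{i+1}H)\le \frac{b_i}{2H}(1+h_i/h_{i+1})\le b_i/H\le CN$ for $i\le J$, and $b_i/H\le CN$ for $i>J$, while the diffusion part of $|r_i|$ is $\eps/(h_{i+1}H)$ or $\eps/H^2$, both $\le CN^2$; and $e_i\le\hbar_ic_i/H\le c_i\le C$ or $e_i=c_i\le C$. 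Summing, the absolute row sum is $\le CN^2$ for every $i$, so $\|\tilde A_N\|\le CN^2$ and the theorem follows at once.

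The calculations are routine; the one place that needs a little care is the transition, i.e. the rows $i=J-1$, $i=J$, $i=J+1$, where the mesh changes character. There one must use the closed-form expressions for $h_J$ and $H$ from the proof of the preceding Lemma to be sure that $h_{J-1}\le h_J\le H$ and that $\hbar_{J-1},\hbar_J\le H$, so that the $b$- and $c$-contributions in those particular rows are genuinely $\Oh(N)$ and $\Oh(1)$. Once that is checked, the dominant entry everywhere is the diffusion coefficient $\eps/(h_1H)=\Oh(N^2)$ associated with the narrowest fine-mesh interval, which also makes clear why the exponent $2$ is the right one.
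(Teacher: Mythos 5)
Your proposal is correct and follows the same route as the paper: the paper simply asserts $\|\tilde A_N\|\le CN^2$ ``by examining the elements of the matrix'' and combines this with Lemma~\ref{lem:stability}, while you supply the (correct) details of that examination, in particular that the dominant entry is $\eps/(h_1H)=\Oh(N^2)$ coming from the smallest mesh step $h_1\asymp\eps N^{-1}$.
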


\section{Uniform convergence}
Let $\tau_i$, $i=1,2,\ldots,N-1$, be the consistency error of the
finite-difference operator $\mathcal{L}^N$,
\[
\tau_i = \mathcal{L}^Nu_i - f_i.
\]
We have
\[
\tau_i = \tau_i[u] := \mathcal{L}^Nu_i - (\mathcal{L}u)_i
\]
and by Taylor's expansion we get that
\begin{equation}\label{eq:Taylor}
|\tau_i[u]| \le C h_{i+1} (\eps\|u'''\|_i + \|u''\|_i),
\end{equation}
where $\|g\|_i:=\max _{x_{i-1}\le x\le x_{i+1}}^{}|g(x)|$ for any
$C(I)$-function $g$. Let us define
\[
\tilde\tau_i[u] = \left\{ \begin{array}{ll}
\dss\frac{\hbar_i}{H}\tau_i[u], & 1\le i \le J,\\
& \\
\tau_i[u], & J+1\le i \le N-1.
\end{array}\right.
\]

\begin{lemma}\label{lem:consistency_error}
The following estimate holds true for all $i=1,2,\ldots,N-1$:
\[
|\tilde \tau_i[u]|\le CN^{-1}.
\]
\end{lemma}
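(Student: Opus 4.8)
The plan is to use the solution decomposition \eqref{eq:decomp}, $u=s+y$, together with the linearity of $\tilde\tau_i[\cdot]$ and the truncation bound \eqref{eq:Taylor}, treating the smooth part $s$ and the layer part $y$ separately and, for each, distinguishing the fine indices $1\le i\le J$ (where $\tilde\tau_i=(\hbar_i/H)\tau_i$) from the coarse indices $J+1\le i\le N-1$ (where $\tilde\tau_i=\tau_i$). I would also record once and for all the two structural facts I intend to lean on: the VB-mesh is monotonically expanding (because $\psi=a\eps\phi$ is convex, $h_i\le h_{i+1}$, and $h_J\le H$ by the previous Lemma), so $\hbar_i\le h_{i+1}$ for every $i$ and hence $\hbar_i/H\le 1$ on the fine part; and every step satisfies $h_{i+1}\le CN^{-1}$, since $\psi'(\alpha)=aq/\zeta^2\le C$ bounds the fine steps, $h_J\le H$, and the coarse steps equal $H=\psi'(\alpha)/N\le CN^{-1}$.

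For the smooth part, \eqref{eq:decomp_est} gives $\eps\|s'''\|_i+\|s''\|_i\le \eps\cdot C(1+\eps^{-1})+C\le C$, so \eqref{eq:Taylor} yields $|\tau_i[s]|\le Ch_{i+1}\le CN^{-1}$, and since $\hbar_i/H\le1$ we get $|\tilde\tau_i[s]|\le CN^{-1}$ for all $i$. For the layer part, \eqref{eq:decomp_est} gives $\eps\|y'''\|_i+\|y''\|_i\le C\eps^{-2}e^{-\beta x_{i-1}/\eps}$, hence $|\tau_i[y]|\le Ch_{i+1}\eps^{-2}e^{-\beta x_{i-1}/\eps}$. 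On the coarse part this is immediate: $h_{i+1}=H\le CN^{-1}$ and $x_{i-1}\ge x_J\ge \psi(\alpha)=a\alpha\sqrt{\eps}/\zeta\ge c\sqrt{\eps}$ for $\eps$ small, so $\eps^{-2}e^{-\beta x_{i-1}/\eps}\le \eps^{-2}e^{-c\beta/\sqrt{\eps}}\le C$. On the fine part I would use $\hbar_i/H\le h_{i+1}/H$ and substitute the explicit expressions $h_{i+1}=a\eps q/\bigl(N(q-t_i)(q-t_{i+1})\bigr)$, $H=aq/(\zeta^2N)$, and $x_{i-1}/\eps=a\,t_{i-1}/(q-t_{i-1})$, together with $(q-\alpha)^2=\zeta^2\eps$ and \eqref{eq:zeta}; all powers of $\eps$ then cancel and the estimate reduces to the $\eps$-free inequality
\[
\frac{e^{-a\beta\, t_{i-1}/(q-t_{i-1})}}{(q-t_i)^2(q-t_{i+1})^2}\le C,
\]
which I would get by rewriting the numerator as $e^{a\beta}e^{-a\beta q/(q-t_{i-1})}$, observing that $q-t_{i-1}>q-t_i>q-t_{i+1}>q-\alpha>0$ with consecutive gaps $N^{-1}$, and invoking the elementary fact that $\mu\mapsto\mu^{-4}e^{-c/\mu}$ is bounded on $(0,\infty)$.

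The delicate point — and where essentially all the work lies — is the fine part near the transition index $J$. Unlike the piecewise-uniform Shishkin mesh, the VB-mesh is graded there, the step ratios $h_{i+1}/h_i$ are not uniformly bounded as $i\to J$, and the correspondingly large truncation error must be absorbed jointly by the small preconditioning factor $\hbar_i/H$ and by the super-algebraic smallness of $e^{-\beta x_{i-1}/\eps}$. Consequently the indices $i=J-1$ and $i=J$ cannot be handled by the generic reduction above: for them one must plug in the exact formulas for $h_J$ and $H$ and the sharp mesh estimates \eqref{eq:ratio_less_J-2} and \eqref{eq:ratio_J} from the preceding Lemma, in place of the crude bound $h_{i+1}\le CN^{-1}$, and verify by a direct (but careful) computation that the two resulting quantities are still $\le CN^{-1}$.
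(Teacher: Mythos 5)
There is a genuine gap, and it sits exactly at the point you yourself call ``where essentially all the work lies.'' Your treatment of the smooth part, of the coarse indices $i\ge J+1$, and of the fine indices with $t_{i-1}\le q-3/N$ (where $q-t_{i+1}\ge\tfrac13(q-t_{i-1})$ makes the bound on $\mu^{-4}e^{-c/\mu}$ applicable) matches the paper and is fine. But for the remaining indices near the transition --- all $i$ with $q-3/N<t_{i-1}<\alpha$, which can include $i=J-2$ as well as $J-1$ and $J$ --- you propose to stay with the Taylor truncation bound \eqref{eq:Taylor} and merely ``plug in the exact formulas for $h_J$ and $H$'' and the estimates \eqref{eq:ratio_less_J-2}, \eqref{eq:ratio_J}. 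No such computation can close the argument. The bound \eqref{eq:Taylor} applied to $y$ forces the factor $\eps\|y'''\|_i+\|y''\|_i\le C\eps^{-2}e^{-\beta x_{i-1}/\eps}$, and for these indices the exponent is only guaranteed to satisfy $\beta x_{i-1}/\eps=a\beta t_{i-1}/(q-t_{i-1})\gtrsim N$ when $\sqrt{\eps}\ll N^{-1}$ (since then $q-t_{J-1}$ can be of order $N^{-1}$ rather than of order $\sqrt{\eps}$). An estimate of the form $\eps^{-2}e^{-cN}$ is unbounded as $\eps\to 0$ with $N$ fixed, so the $\eps^{-2}$ is simply not controlled, no matter how sharply you track $h_J$, $H$, or $\hbar_i/H\le 1$; the mesh estimates \eqref{eq:ratio_less_J-2} and \eqref{eq:ratio_J} are stability tools and do not help here.

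The missing idea is the paper's Case 3: for these indices one abandons Taylor expansion entirely and uses the fact that $y$ solves the homogeneous equation $\mathcal{L}y=0$, so that $\tau_i[y]=\mathcal{L}^N y_i$ can be bounded term by term through \emph{first} divided differences, i.e.\ $\eps|D''y_i|\le \frac{2\eps}{\hbar_i}\|y'\|_i$ and $|D'y_i|\le\|y'\|_i$ with only $\|y'\|_i\le C\eps^{-1}e^{-\beta x_{i-1}/\eps}$. The explicit $\eps$ in $\eps D''$ cancels the single power $\eps^{-1}$, the preconditioner $\hbar_i/H$ cancels the $1/\hbar_i$, and what remains is $CNe^{-a\beta\phi(q-3/N)}=CNe^{-a\beta(qN/3-1)}\le CN^{-1}$, with no stray power of $\eps$. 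This is the crux of the lemma (and the reason the paper points out in Section 2 that $y$ satisfies a homogeneous equation); without it your proof does not go through.
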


\begin{proof}
We use the decomposition \eqref{eq:decomp} and estimates
\eqref{eq:decomp_est}. For the smooth part of the solution, it is
easy to show that $|\tilde \tau[s]|\le CN^{-1}$. Then we need to
show that
\[
|\tilde \tau_i[y]| \le CN^{-1}.
\]

\textbf{Case 1.} Let $i\ge J+1$, i.e.~$t_{i-1}\ge t_J\ge\alpha$.
Then we have
\[
\begin{split}
|\tilde \tau_i[y]|=|\tau_i[y]|&\le Ch_{i+1}\left(\eps\|y'''\|_i+\|y''\|_i\right)\\
&\le CN^{-1}\lambda'(t_{i+1})\eps^{-2}e^{-\beta\lambda(t_{i-1})/\eps}\\
&\le CN^{-1}\lambda'(t_{i+1})\eps^{-2}e^{-\beta\lambda(\alpha)/\eps}\\
&\le CN^{-1}\eps^{-2}e^{-a\beta\alpha/(\zeta\sqrt{\eps})}\\
&\le CN^{-1},
\end{split}
\]
where we have used the fact that
$\eps^{-2}e^{-a\beta\alpha/(\zeta\sqrt{\eps})}\le C$.

\textbf{Case 2.} Let $i\le J$, i.e.~$t_{i-1}<\alpha$, and at the
same time, let $t_{i-1}\le q-3/N$. Note that, when $t_{i-1}\le
q-3/N$, we have
\[
t_{i+1}\le q-1/N<q \quad \mbox{and}\quad q-t_{i+1}\ge
\dfrac{1}{3}(q-t_{i-1}).
\]
This is because
\[
q-t_{i-1}\ge \frac{3}{N} \ \ \Rightarrow \ \
\frac{2}{3}(q-t_{i-1})\ge \frac{2}{N},
\]
which gives
\[
q-t_{i+1}=q-t_{i-1}-\frac{2}{N}=\frac{1}{3}(q-t_{i-1})+\frac{2}{3}(q-t_{i-1})-\frac{2}{N}\ge
\frac{1}{3}(q-t_{i-1}).
\]

Therefore,
\[
\begin{split}
|\tilde \tau_i[y]|=\frac{\hbar_i}{H}|\tau_i[y]|&\le \frac{\hbar_i}{H}Ch_{i+1}\left(\eps\|y'''\|_i+\|y''\|_i\right)\\
&\le CN^{-1}\left[\lambda'(t_{i+1})\right]^2\eps^{-2}e^{-\beta\lambda(t_{i-1})/\eps}\\
&\le C N^{-1}\left[\phi'(t_{i+1})\right]^2e^{-a\beta\phi(t_{i-1})}\\
&\le C\eps^{-1}N^{-1}(q-t_{i+1})^{-4}e^{-a\beta(q/(q-t_{i-1})-1)}\\
&\le CN^{-1}(q-t_{i-1})^{-4}e^{-a\beta q/(q-t_{i-1})}\\
&\le CN^{-1},\\
\end{split}
\]
because $(q-t_{i-1})^{-4}e^{-a\beta q/(q-t_{i-1})}\le C$.

\textbf{Case 3.} In the last case, we consider the remaining
possibility, $q-3/N< t_{i-1} < \alpha$. We use the fact that
$\mathcal{L}y=0$ to work with
\[
|\tilde\tau_i[y]| = \frac{h_i}{H}|\tau_i[y]| \le
\frac{\hbar_i}{H}\left(P_i + Q_i+ R_i\right),
\]
where
\[
P_i= \eps |D'' y_i|, \ \  \ \ Q_i = b_i|D'y_i|,  \ \ \mbox{ and} \ \
R_i=c_i|y_i|.
\]
We now follow closely the technique in~\cite[Lemma 5]{Vul01}, (see
also~\cite{VN14,NV15}), to get
\[
\begin{split}
\frac{\hbar_i}{H}\left(P_{i}+Q_i+R_i\right)&\le
C\left[\frac{\hbar_i}{H}\left(\frac{1}{\hbar_i}\eps\cdot 2 \|
y'\|_i\right) +
\frac{\hbar_i}{H}\left(\frac{1}{h_{i+1}}\| y\|_i\right)+ e^{-\beta\lambda(t_{i})/\eps}\right]\\
&\le C N e^{-\beta\lambda(t_{i-1})/\eps}\\
&\le C Ne^{-a\beta\phi(t_{i-1})}\\
&\le C Ne^{-a\beta\phi(q-3/N)}\\
&\le  C Ne^{-a\beta(qN/3-1)}\\
&\le CN^{-1}.
\end{split}
\]
\end{proof}
\begin{remark}
The technique used in the above proof is based on~\cite{Vul83},
where the same approach is successfully applied to
reaction-diffusion problems. This approach is originally due to
Bakhvalov~\cite{Bakh}. The technique works here for
convection-diffusion problems~\eqref{eq:1DCD} because an extra
$\eps$-factor is obtained from the
preconditioner~\eqref{eq:preconditioner}.
\end{remark}

When Lemmas \ref{lem:stability} and \ref{lem:consistency_error} are
combined, which amounts to the use of the consistency-stability
principle, we obtain the following result.

\begin{thm}
Let $\eps$ be sufficiently small, independently of $N$, and let
$a>4/\beta$. Then the solution $U^N$ of the discrete problem
\eqref{eq:matrix_form} on the VB-mesh satisfies
\[
\left\|U^N-u^N\right\|\le CN^{-1},
\]
where $u$ is the solution of the continuous problem \eqref{eq:1DCD}.
\end{thm}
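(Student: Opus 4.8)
The plan is to combine the uniform stability of the preconditioned system (Lemma~\ref{lem:stability}) with the consistency bound for the preconditioned truncation error (Lemma~\ref{lem:consistency_error}) via the standard consistency-stability argument. First I would introduce the error mesh function $e^N := U^N - u^N$ and observe that, since both $U^N$ and $u^N$ satisfy the boundary conditions exactly, $e^N_0 = e^N_N = 0$. For the interior indices $i = 1,2,\ldots,N-1$, applying the original discrete operator gives $\mathcal{L}^N e^N_i = \mathcal{L}^N U^N_i - \mathcal{L}^N u^N_i = f_i - \mathcal{L}^N u^N_i = -\tau_i[u]$, so in matrix form $A_N e^N = -\hat\tau^N$, where $\hat\tau^N$ is the vector with entries $0, \tau_1[u], \ldots, \tau_{N-1}[u], 0$.

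Next I would apply the preconditioner: multiplying by the diagonal matrix $M$ from~\eqref{eq:preconditioner} yields $\tilde A_N e^N = M A_N e^N = -M\hat\tau^N$, and by the definition of $\tilde\tau_i[u]$ the right-hand side is precisely the vector with entries $0, \tilde\tau_1[u], \ldots, \tilde\tau_{N-1}[u], 0$ (the zeroth and $N$th entries survive unchanged because $m_0 = m_N = 1$ and the corresponding components of $\hat\tau^N$ vanish anyway). Taking norms and using the subordinate matrix norm,
\[
\left\|e^N\right\| = \left\|\tilde A_N^{-1} M \hat\tau^N\right\| \le \left\|\tilde A_N^{-1}\right\| \cdot \left\|M\hat\tau^N\right\|.
\]
By Lemma~\ref{lem:stability}, $\|\tilde A_N^{-1}\| \le C$ (valid here since $\eps$ is sufficiently small and $a > 4/\beta$, exactly the hypotheses of the theorem), and by Lemma~\ref{lem:consistency_error}, $\|M\hat\tau^N\| = \max_{1\le i\le N-1}|\tilde\tau_i[u]| \le CN^{-1}$. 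Multiplying the two bounds gives $\|U^N - u^N\| \le CN^{-1}$, which is the claim.

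I do not expect a genuine obstacle here, since the two lemmas have been set up precisely to make this step routine; the only points requiring a little care are bookkeeping ones. One should check that preconditioning does not disturb the boundary rows — it does not, because $m_0 = m_N = 1$ — and that the quantity controlled by the stability lemma, namely $\|\tilde A_N^{-1}\|$ in the maximum norm subordinate sense, is exactly what is needed to pass from the residual vector to the error vector in the maximum norm. If one prefers to avoid matrix-norm language entirely, the same conclusion follows from the $M$-criterion (Theorem~\ref{M_crit}) applied directly to $\tilde A_N$ with the barrier function $v$ constructed in the proof of Lemma~\ref{lem:stability}: since $|\tilde A_N e^N| \le \|M\hat\tau^N\|\,\mathbf{1} \le CN^{-1}\,(\tilde A_N v)/\delta$ componentwise in the interior and the boundary values of $e^N$ vanish, inverse monotonicity of $\tilde A_N$ yields $|e^N| \le (CN^{-1}/\delta)\,v$, hence $\|e^N\| \le CN^{-1}\|v\|/\delta \le CN^{-1}$. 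Either route completes the proof.
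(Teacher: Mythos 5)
Your argument is correct and is exactly the paper's approach: the authors prove the theorem in one line by "combining Lemmas~\ref{lem:stability} and \ref{lem:consistency_error} via the consistency-stability principle," which is precisely the computation $\tilde A_N e^N = -M\hat\tau^N$ and $\|e^N\|\le\|\tilde A_N^{-1}\|\,\|M\hat\tau^N\|\le CN^{-1}$ that you spell out. Your bookkeeping about the boundary rows and the alternative barrier-function phrasing are both fine and add nothing beyond what the paper implicitly assumes.
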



\end{document}